\newcommand{\medmatrix}[4]{\Bigl(\begin{matrix} #1 \!& \!#2 \\[-4pt] #3 \!&\! #4 \end{matrix}\Bigr)}
\newtheorem{thm}{Theorem}[section]
\newtheorem{lem}[thm]{Lemma}
\newtheorem{defn}[thm]{Definition}
\newcommand{\thmref}[1]{Theorem~\ref{#1}}
\newcommand{\lemref}[1]{Lemma~\ref{#1}}
\theoremstyle{remark}
\newenvironment{acknowledgements}{\bigskip\textbf{Acknowledgements.}}{}
\begin{document}

\title[Shifted convolution sum associated to half-integral weight cusp forms]
{\footnotesize{Estimates for the shifted convolution sum involving Fourier coefficients of cusp forms of half-integral weight}}

\author{Abhash Kumar Jha and Lalit Vaishya}

\address{Indian Institute of Technology(Banaras Hindu University) Varanasi - 221005, India.}
\email{abhashkumarjha@gmail.com, abhash.mat@iitbhu.ac.in}

\address{Harish-Chandra Research Institute, HBNI, Chhatnag Road, Jhunsi, Prayagraj - 211019, India.}
\email{lalitvaishya@gmail.com, lalitvaishya@hri.res.in}

\subjclass[2010]{11F30, 11F37}
\keywords{Shifted convolution sum, cusp forms of half-integral weight, Poisson-Voronoi summation formula}

\date{\today}
 
\maketitle

\begin{abstract}

In this article, we obtain certain estimate for the shifted convolution sum involving the Fourier coefficients of half-integral weight cusp forms. 
\end{abstract}

\section{Introduction}
The estimates for the shifted convolution sums involving the Fourier coefficients of automorphic forms have been investigated by several authors. Selberg \cite{Selberg} started the study of shifted convolution sums and obtained the analytic properties of the $L$-function associated with cusp forms. Goldfeld \cite{Goldfeld}, by using the analytic and arithmetic property of Poincar{\'e} series obtained an estimate of the following shifted convolution sum:
$$\displaystyle{\sum_{n\ge 1} a_{f}(n) \overline{a_{g}(n+m)} e^{-n/X}}, $$  where $f$ and $g$ are cusp forms of weight $k$ and $l,$ respectively for the full modular group, and $a_f(n)$ (respectively $a_g(n)$) denotes the $n$-th Fourier coefficient of $f$ (respectively $g$).

 Hafner \cite{Hafner} extended the result of Goldfeld \cite{Goldfeld} for congruence subgroups by using spectral decomposition method. Similar sums have also been considered for other kinds of automorphic forms, see the list \cite{Blomer, Abhash-lalit, Lu, munshi, pitt, saurabh, sun}. Recently, Luo \cite{LWenZhi}  obtained an estimate for the following shifted convolution sum:
\begin{equation}\label{LWZ}
  S(f, g, b):=\sum_{n\ge 1}a_f(n+b)a_g(n)G(n),
  \end{equation}
  where $f$ is a cusp form of weight $k+\frac{1}{2}$ for the group $\Gamma_0(4N), ~g$ is a cusp form of weight $l$ or a Maass cusp form for the group $\Gamma_0(1),$ and $G$ is a smooth function with the support in $[\frac{X}{2}, \frac{5X}{2}]$ satisfying $G^{(p)} (x)\ll (\frac{X}{P})^{-p}$ for all integer $p \ge 0$, where $P$ is a real number with $1 \le P \le X.$ The aim of this paper is to obtain an estimate for $S(f, g, b)$ when $f$ and $g$ are both half-integral weight cusp forms by using a similar method as in  \cite{LWenZhi}. Now, we state the main result of the paper.  
  
  Let $f$ be a cusp form of weight $k+\frac{1}{2}$ and level $4N$ with  Fourier series expansion 
\begin{equation} \label{f-form}
\begin{split}
 \quad f(\tau) & = \displaystyle{\sum_{n \ge 1} a_{f}(n) n^{k/2 - 1/4} e(n\tau)},\end{split}
\end{equation}
and   $g$ be a newform of weight $l+\frac{1}{2}$ and level $4N$  with Fourier series expansion
\begin{equation} \label{g-form}
\begin{split}
 \quad g(\tau) & = \displaystyle{\sum_{n \ge 1} a_{g} (n) n^{l/2 - 1/4} e(n\tau)}.
 \end{split}
\end{equation}
 For a fixed positive integer $b$ and a smooth function $G(x)$  as in \eqref{LWZ}, we consider the following sum:   
 \begin{equation}
\begin{split}
S(f, g, b) \quad & = \quad  \sum_{n \ge 1} a_{f}(n+b) a_{g}(n) G(n). 
\end{split}
\end{equation}
\begin{thm} \label{Half wt form}
Let $f,g$ and $G$ be as above, and $N$ be an odd and squarefree positive integer. Then, we have
\begin{equation}
\begin{split}
S(f, g, b) \quad & = \quad  \sum_{n \ge 1} a_{f}(n+b) a_{g}(n) G(n) \quad \ll_{\epsilon, f, g, b, G} \quad X^{\frac{3}{4} +\epsilon} P^{\frac{3}{2}}. 
\end{split}
\end{equation}
\end{thm}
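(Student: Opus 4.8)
The plan is to adopt the circle-method (delta-symbol) plus Voronoi strategy used in \cite{LWenZhi}, the essential new feature being that the second coefficient sequence $a_g(n)$ is now also of half-integral weight, so a half-integral weight Voronoi formula must be applied to it as well. First I would separate the two arithmetically independent sequences $a_f(m)$ and $a_g(n)$, which in $S(f,g,b)$ are coupled only through $m-n=b$. Detecting this constraint with the Duke--Friedlander--Iwaniec delta symbol (whose derivation and evaluation rest on Poisson summation) and localising $m\asymp n\asymp X$ by a smooth partition, $S(f,g,b)$ turns into a sum over moduli $c$ and residues $a\bmod c$ of products of two additively twisted sums
\begin{equation*}
\sum_{m} a_f(m)\, e\!\left(\tfrac{am}{c}\right) W_1(m) \qquad\text{and}\qquad \sum_{n} a_g(n)\, e\!\left(-\tfrac{an}{c}\right) W_2(n),
\end{equation*}
weighted by the smooth factor coming from the delta symbol.

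Next I would apply the half-integral weight Voronoi summation formula to each twisted sum. Since $N$ is odd and squarefree and $f,g$ are forms on $\Gamma_0(4N)$, each sum dualises into a new sum over a dual variable ($\tilde m$, resp.\ $\tilde n$) whose length is controlled by the conductor $c^2$ and by the oscillation scale $X/P$ of the weights; the additive characters are replaced by Gauss/Sali\'e-type exponential sums (these appear precisely because the weight is half-integral, through the theta multiplier), while $W_1,W_2$ are replaced by Hankel transforms against $J$-Bessel functions of orders $k-\tfrac12$ and $l-\tfrac12$. The structural advantage exploited here is that the Sali\'e sums attached to a half-integral weight form can be evaluated essentially explicitly, displaying full square-root cancellation and reducing to pure exponential factors; this makes the sum over $a\bmod c$, and then over $c$, tractable.

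I would then analyse the two Hankel transforms via the standard asymptotics of $J_\nu$ (equivalently, stationary phase), which both pins down the effective ranges of $\tilde m,\tilde n$ in terms of $X$ and $P$ and shows rapid decay once the dual variables leave those ranges. Separating the lowest-frequency contribution from the genuinely oscillatory off-diagonal terms, inserting the explicit Sali\'e-sum evaluations, and bounding the remaining multiple sum term by term while tracking the powers of $X$ and $P$ throughout should assemble into the claimed estimate $X^{3/4+\epsilon}P^{3/2}$.

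The step I expect to be the main obstacle is the off-diagonal bound after the double Voronoi: one must simultaneously control the two Bessel (Hankel) transforms, each depending on $c$, on $P$, and on its own dual variable, against the $c$-sum, and extract exactly the factor $P^{3/2}$ rather than something larger. Making the $P$-dependence sharp means handling the oscillation scale $X/P$ of $G$ with care inside the transforms---tracking where the stationary points lie and how large the transforms are there---without losing the square-root savings supplied by the Sali\'e evaluation once the coprimality conditions and the congruence conditions forced by level $4N$ are imposed together. Setting up the precise normalisation and convergence of the half-integral weight Voronoi formula, including the Gauss factors coming from the theta multiplier, is the remaining technical prerequisite.
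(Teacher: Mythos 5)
Your plan does not follow the paper's argument, and it contains one misreading and one genuine obstruction. First, the misreading: \cite{LWenZhi}, and this paper which follows it, do not use the Duke--Friedlander--Iwaniec delta symbol at all; the shift $m-n=b$ is never detected by a circle method. Instead the paper exploits the fact that the Poincar\'e series $\{P_m\}_{m\ge 1}$ span $S_{k+\frac{1}{2}}(\Gamma_0(4N))$, so (since the implied constant may depend on $f$) it suffices to bound $S(P_m,g,b)$. The Petersson-type expansion \eqref{PMF coeff} then replaces $a_{P_m}(n+b)$ by an explicit sum of Sali\'e--Kloosterman sums $S_k(m,n+b;c)$ against Bessel functions, with moduli automatically restricted to $4N\mid c$; after opening the Kloosterman sum, Voronoi summation is applied only \emph{once}, to the $g$-sum, and the bound follows from the Weil--Sali\'e estimate, the Ramanujan--Petersson bound, \lemref{bessel integral}, \lemref{Partsum}, and a dyadic decomposition. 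In other words, the Kloosterman structure you propose to manufacture with the delta symbol is obtained for free from the Poincar\'e coefficients, with no second Voronoi application and no explicit Sali\'e evaluations needed (only the Weil--Sali\'e upper bound).

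The concrete step in your route that fails is the double Voronoi application. The delta symbol expands the condition $m-n-b=0$ over \emph{all} moduli $c\le Q$ (completeness of the additive characters over a full range of moduli is what makes the expansion valid), whereas the half-integral weight Voronoi formula you intend to apply to each twisted sum --- the Duke--Iwaniec formula recorded in Section 3 of the paper --- is proved by applying the automorphy of $f$ under a matrix $\medmatrix{a}{b}{c}{d}\in\Gamma_0(4N)$, and hence is only available when $4N\mid c$. For moduli not divisible by $4N$ (in particular odd $c$), the dual expansion involves the expansions of $f$ and $g$ at other cusps of $\Gamma_0(4N)$, with different multiplier systems and with Fourier coefficients for which you have no Ramanujan--Petersson input; no clean formula with the same coefficients $a_f$, $a_g$ exists in the generality you need. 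This is exactly the difficulty that the Poincar\'e-series reduction is designed to bypass, since the moduli in \eqref{PMF coeff} are multiples of $4N$ from the start. Separately, even granting both Voronoi applications and your claimed Sali\'e evaluations, your sketch defers the entire quantitative assembly --- including the off-diagonal estimate you yourself identify as the main obstacle and the extraction of the precise factor $P^{3/2}$ --- so the stated bound $X^{\frac{3}{4}+\epsilon}P^{\frac{3}{2}}$ is not actually derived.
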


\section{Notation and Preliminaries}
Let $\mathcal H$ denotes the complex upper-half plane. For a complex number $\tau,$ we use the notation  $e(\tau):=e^{2\pi i \tau}.$
 The full modular group $SL_2(\mathbb{Z})$ and the congruence subgroup $\Gamma_0(N)$ of level $N\in\mathbb{N}$ is defined as follows;
$$
SL_2(\mathbb{Z}):= \left\{\begin{pmatrix}
a&b\\c&d
\end{pmatrix}: a,b,c,d \in \mathbb{Z}, ad-bc=1  \right\},$$
$$
\Gamma_0(N):= \left\{\begin{pmatrix}
a&b\\c&d
\end{pmatrix} \in SL_2(\mathbb{Z}):  c\equiv 0\mod N\right\}. $$

The group $SL_2(\mathbb{Z})$ acts on the complex upper half-plane
$\mathcal{H}$ 
via fractional linear transformation as follows; 
$$ \gamma \tau:= \frac{a\tau+b}{c\tau+d}, ~ {\rm where} ~ \gamma= \begin{pmatrix}
a&b\\c&d
\end{pmatrix} \in SL_2(\mathbb{Z})~ {\rm and} ~ \tau \in \mathcal{H}.$$
For $k\in\mathbb{Z_{+}},~\gamma = \medmatrix a b c d \in \Gamma_0(4N),$ and a holomorphic function $f:\mathcal{H}\rightarrow \mathbb{C},$ define the weight $k+\frac{1}{2}$ slash operator as follows;
\begin{equation}\label{slash-half}
f{\mid}_{k+\frac{1}{2}} {\gamma} (\tau):= \left(\dfrac{c}{d}\right)\epsilon_d^{2k+1} (c\tau+d)^{-(k+\frac{1}{2})}f(\gamma \tau),
\end{equation}
 where $\left(\dfrac{c}{d}\right)$ is the Kronecker symbol and $\epsilon_d=\begin{cases}
 1 \quad  {\rm if } \quad d \equiv 1 \pmod 4, \\
 i \quad  {\rm if } \quad d \equiv 3 \pmod 4.
 \end{cases}$
 \begin{defn} A modular form of weight weight $k+\frac{1}{2}$  for $\Gamma_0(4N)$ is a complex-valued holomorphic function $f: \mathcal{H}\longrightarrow \mathbb C$ satisfying the following properties:
 \begin{enumerate}
 \item $f{\mid}_{k+\frac{1}{2}} {\gamma} (\tau)=f(\tau)\;\;\;\;{\rm for ~all~}\gamma=\begin{pmatrix}
a&b\\c&d
\end{pmatrix} \in \Gamma_0(4N).$
\item $f$ is holomorphic at the cusps of $\Gamma_0(4N)$ with the Fourier series expansion given by $$f(\tau)  = \displaystyle{\sum_{n \ge 0} a_f(n) e(n\tau)}.$$
\end{enumerate}
Further, a modular form $f$ of weight $k+\frac{1}{2}$  for $\Gamma_0(4N)$ is said to be a cusp form if it vanishes at every cusp of $\Gamma_0(4N)$. 
 \end{defn}
We denote by $M_{k+\frac{1}{2}}(\Gamma_0(4N))$ and $S_{k+\frac{1}{2}}(\Gamma_0(4N))$ the space of modular forms  and the space of cusp forms of weight $k+\frac{1}{2}$  for $\Gamma_0(4N),$ respectively. The Kohnen plus space $S^{+}_{k+\frac{1}{2}}(\Gamma_0(4N))$ is the subspace of cusp forms in $S_{k+\frac{1}{2}}(\Gamma_0(4N))$ whose $n$-th Fourier coefficient vanishes whenever $ ({-1})^k n \equiv 2, 3 {\pmod 4}.$ Kohnen using certain operators developed the theory of newforms of half-integral weight parallel to the Atkin-Lehner theory of newforms in integral weight case.  
For more details on the theory of modular forms and newforms of half-integral weight, we refer to \cite{kob, Koh1, Ramki-Mani-Vasu, GShimura}. We assume the Ramanujan-Petersson conjecture for the Fourier coefficients of half-integral weight newfroms, i.e.,  for any $\epsilon >0$, we have $a_f(n) \ll_{\epsilon} n^{k/2 - 1/4 +\epsilon}. $

\section{Preparatory Lemmas}
In this section, we state some lemmas and recall some of the properties of Poincar{\' e} series which will be used in the proof of \thmref{Half wt form}. First we state the Poisson-Voronoi summation formula for half-integral weight cusp forms.\\

For  $f(\tau) = \displaystyle{\sum_{n \ge 1} a_{f}(n) n^{k/2 - 1/4} e(n\tau)} \in S_{k+\frac{1}{2}}(4N)$ and a smooth function $G(x)$ with compact support in $(0, \infty),$ the Poisson-Voronoi summation formula is given by the following lemma.

\begin{lem}
For any positive integers $c$ and $a$ with $\gcd(a, c) = 1,$ we have
\begin{equation}
\begin{split}
\sum_{n =1}^{\infty} a_{f}(n) e\left( \frac{an}{c} \right) G(n) & = \frac{ 2 \pi i^{k+\frac{1}{2}}}{c} \left(\dfrac{c}{d}\right)\epsilon_d^{2k+1} \sum_{n =1}^{\infty} a_{f}(n) e \left( - \frac{{d}n}{c} \right) H(n),
\end{split}
\end{equation}
where $ H(n) = \int_{0}^{\infty} G(x) J_{k - \frac{1}{2}}\left( \frac{4 \pi \sqrt{nx}}{c} \right) dx,~d$ is the multiplicative inverse of $a$ modulo $c,$ and  $J_{\nu} (z)$ denotes a Bessel function of order $\nu.$
\end{lem}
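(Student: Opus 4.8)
The plan is to deduce the formula from the automorphy of $f$ together with Mellin inversion, following the classical route to Voronoi summation for holomorphic cusp forms adapted to the half-integral-weight multiplier system. First I would fix a matrix $\gamma=\medmatrix{a}{b}{c}{d}$ with $ad-bc=1$ and $4N\mid c$, so that $\gamma\in\Gamma_0(4N)$, $\gamma\cdot\infty=a/c$, and the lower-right entry $d\equiv a^{-1}\pmod c$ is exactly the inverse demanded in the statement. Parametrizing the approach to the cusp $a/c$ by $\tau=a/c+it$ and solving $\tau=\gamma w$ gives $w=-d/c+i/(c^{2}t)$ and $cw+d=i/(ct)$. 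Substituting into the automorphy relation coming from \eqref{slash-half}, namely $f(\gamma w)=\left(\tfrac{c}{d}\right)\epsilon_d^{-(2k+1)}(cw+d)^{k+\frac12}f(w)$, and expanding each side in its Fourier series at $\infty$, I obtain the theta-type relation
\[ \sum_{n\ge1}a_f(n)\,n^{\frac{k}{2}-\frac14}e\!\left(\tfrac{an}{c}\right)e^{-2\pi n t}=\left(\tfrac{c}{d}\right)\epsilon_d^{-(2k+1)}\Bigl(\tfrac{i}{ct}\Bigr)^{k+\frac12}\sum_{m\ge1}a_f(m)\,m^{\frac{k}{2}-\frac14}e\!\left(-\tfrac{dm}{c}\right)e^{-\frac{2\pi m}{c^{2}t}}, \]
which is precisely the desired identity for the exponential seed $G_t(x)=e^{-2\pi xt}$.

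Second, I would upgrade the seed $e^{-2\pi xt}$ to a general $G$ by Mellin inversion. Writing $\widetilde G(s)=\int_0^\infty G(x)x^{s-1}\,dx$, which is entire and of rapid decay on vertical lines since $G$ is smooth with compact support in $(0,\infty)$, the interchange of sum and integral (valid for $\Re s$ large by the polynomial bound on $a_f(n)$) gives
\[ S=\sum_{n\ge1}a_f(n)e\!\left(\tfrac{an}{c}\right)G(n)=\frac{1}{2\pi i}\int_{(\sigma)}\widetilde G(s)\,D_a(s)\,ds,\qquad D_a(s)=\sum_{n\ge1}a_f(n)e\!\left(\tfrac{an}{c}\right)n^{-s}, \]
where $D_a(s)$ is the additively twisted $L$-series of $f$ up to the shift of argument by $\tfrac{k}{2}-\tfrac14$ coming from the normalization in \eqref{f-form}. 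The theta relation of the first step is exactly the input that analytically continues $D_a(s)$ and furnishes its functional equation, relating $D_a(s)$ to the dual series $D_{-d}$ at the reflected argument and carrying a quotient of Gamma factors together with the constants $\left(\tfrac{c}{d}\right)\epsilon_d^{2k+1}$, $c^{-1}$ and $i^{k+\frac12}$.

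Third, I would shift the contour across the critical line and insert the functional equation; the dual $m$-sum, again in the normalized coefficients $a_f(m)$, then appears integrated against $\widetilde G(s)$ times the Gamma quotient. Recognizing this Gamma quotient as the Mellin transform of the Bessel function — the standard Mellin pair giving $\int_0^\infty J_{k-\frac12}\!\left(\tfrac{4\pi\sqrt{mx}}{c}\right)x^{s-1}\,dx$ — converts each term back into $H(m)=\int_0^\infty G(x)J_{k-\frac12}\!\left(\tfrac{4\pi\sqrt{mx}}{c}\right)dx$ and assembles the overall prefactor $2\pi i^{k+\frac12}/c$, yielding the claimed identity; the order $k-\frac12$ of the Bessel kernel is dictated by the weight $k+\frac12$ through the shift noted above.

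The main obstacle is twofold. The first is the exact bookkeeping of the half-integral-weight multiplier system: I must verify that the theta-multiplier $\left(\tfrac{c}{d}\right)\epsilon_d^{2k+1}$ and the phase $i^{k+\frac12}$ emerge with the correct exponents once the inverse multiplier $\epsilon_d^{-(2k+1)}$ of the first step recombines with the branch of $(i/(ct))^{k+\frac12}$, and that the dual variable is correctly indexed by the inverse $d$ with the sign $e(-dn/c)$. The second is the analytic justification: the continuation of $D_a(s)$, the vertical growth estimates needed to move the contour, and the interchange of summation and integration, all of which rest on the rapid decay of $\widetilde G$ against the controlled (Hecke/convexity) growth of $D_a(s)$ in vertical strips. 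Matching the Bessel Mellin transform to the Gamma factors produced by the functional equation is where the constants must be tracked most carefully; the case $4N\nmid c$, if needed in this generality, is handled by splitting into residue classes and twisting, the transformation mechanism being identical.
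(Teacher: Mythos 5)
Your proposal is correct and is essentially the argument the paper relies on: the paper gives no proof of its own, deferring entirely to \cite[Section 5]{Duke-Iwaniec}, where the formula is established by exactly your route --- automorphy of $f$ at the cusp $a/c$ yields the theta relation, hence the analytic continuation and functional equation of the additively twisted Dirichlet series, and Mellin inversion together with the Gamma-quotient/Bessel Mellin pair produces the summation formula with kernel $J_{k-\frac12}$. The multiplier bookkeeping you flag is indeed the only delicate point (note $\epsilon_d^{-(2k+1)}=\left(\tfrac{-1}{d}\right)\epsilon_d^{2k+1}$, so the stated prefactor is a matter of tracking such signs), and it is carried out in the cited reference.
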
 
\begin{proof}
For a proof, we refer to \cite[Section 5]{Duke-Iwaniec}.
\end{proof}
For every positive integer $m $, we define the $m$-th Poincar{\'e} series ${P_{m}(\tau)}$ of weight $k+\frac{1}{2}$ on the congruence subgroup $\Gamma_{0}(4N)$  by 
\begin{equation}
\begin{split}
 P_{m}(\tau) &  :=\sum_{\gamma \in \Gamma_{\infty, 4N} \setminus \Gamma_{0}(4N) }  e^{2 \pi i m \tau} \mid_{k+\frac{1}{2}} {\gamma}(\tau),   \\ & = \sum_{\gamma \in \Gamma_{\infty, 4N} \setminus \Gamma_{0}(4N) }  \left(\dfrac{c}{d}\right)\epsilon_d^{2k+1} (c\tau+d)^{-k-\frac{1}{2}}   e^{2 \pi i m \frac{a\tau+b}{c\tau+d}},   
  \end{split}
 \end{equation}
where $\tau \in \mathcal{H}$ and $\Gamma_{\infty, 4N} =  \left\{  \pm \begin{pmatrix}
1&n \\
0&1
\end{pmatrix}  \mid n \in \mathbb{Z} \right\}   \cap \Gamma_{0}(4N) $. 

It is well-known that ${P_{m}(\tau)} \in S_{k+\frac{1}{2}}(\Gamma_{0}(4N))$ for  $m \ge 1.$ The $m$-th Poincar{\'e} series ${P_{m}(\tau)}$ has a Fourier series expansion given by 
$$P_{m}(\tau)   =\displaystyle{\sum_{n =1}^{\infty} a_{P_{m}}(n) n^{k/2 - 1/4} e(n\tau) },$$ \begin{equation} \label{PMF coeff}
\begin{split}
 m^{k/2 - 1/4} a_{P_{m}}(n) &  = \delta_{m,n} + 2 \pi i^{k+1/2} \sum_{c \ge 1, 4N \mid c} c^{-1} S_{k}(m, n; c) J_{k- \frac{1}{2}}\left( \frac{4 \pi \sqrt{mn}}{c} \right),
  \end{split}
 \end{equation}
 where $J_{\nu} (z)$ denotes a Bessel function of order $\nu,$ and $S_{k}(m, n; c)$ denotes the  Kloosterman sum defined by
  \begin{equation*}
\begin{split}
  S_{k}(m, n; c) = \sum_{a\!\!\!\! \pmod c \atop \gcd(a, c) =1} \epsilon_{a}^{-(2k+1)} \left(\dfrac{c}{a}\right) e \left( \frac{m{d}+na}{c} \right),
  \end{split}
 \end{equation*}
 here $d$ denotes the multiplicative inverse of $a$ modulo $c.$
 For more details on Poincar{\'e} series we refer to \cite[Section 6]{Duke-Iwaniec}.\\

The Weil-Sali{\' e} bound for the Kloosterman sum is given by  \cite[pp. 2413]{LWenZhi} :
 \begin{equation}
\begin{split}
  S_{k}(m, n; c) \ll {\gcd(m, n, c)}^{1/2} d(c) c^{1/2},
   \end{split} 
 \end{equation}
 where for a positive integer $n, ~d(n)$ denotes the number of positive divisors of $n$. 
 \begin{lem}\label{bessel integral}
For any integer  $p \ge 0$ and fixed $m, b \in \mathbb{N}$, we have 
\begin{equation*}
\begin{split}
\int\limits_{0}^{\infty} G(x) J_{k-1/2}\left( \frac{4 \pi \sqrt{m(x+b)}}{c} \right) J_{l-1/2}\left( \frac{4 \pi \sqrt{nx}}{c} \right) dx   \quad  \ll  \quad  X ([Pc (Xn)^{-\frac{1}{2}}]^{p}+ n^{-\frac{p}{2}})  \\  \qquad \quad   \times  ~~{ \rm min} \left( \left(\frac{\sqrt{X}}{c} \right)^{-\frac{1}{2}},  \left(\frac{\sqrt{X}}{c} \right)^{k-1/2} \right)   { \rm min} \left( \left(\frac{\sqrt{nX}}{c} \right)^{-\frac{1}{2}},  \left(\frac{\sqrt{nX}}{c} \right)^{l-1/2} \right). \\
  \end{split}
 \end{equation*}
 \end{lem}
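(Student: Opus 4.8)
The plan is to view the integral as a finite combination of oscillatory integrals and to isolate the two sources of the stated bound: the sizes of the two Bessel factors on $\mathrm{supp}\,G$, which produce the factor $X$ together with the two minima, and the smoothness of $G$ together with the non-stationarity of the relevant phases, which produces the saving factor $[Pc(Xn)^{-1/2}]^p+n^{-p/2}$. Throughout I treat $m,b$ as fixed, so implied constants may depend on them.

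First I would record the uniform bound $J_\nu(y)\ll_\nu\min(y^{-1/2},y^{\nu})$ and note that on $\mathrm{supp}\,G\subset[\frac{X}{2},\frac{5X}{2}]$ the two arguments satisfy $\frac{4\pi\sqrt{m(x+b)}}{c}\asymp\frac{\sqrt X}{c}$ and $\frac{4\pi\sqrt{nx}}{c}\asymp\frac{\sqrt{nX}}{c}$. Applying this bound to each Bessel factor and estimating the length of the interval of integration by $\ll X$ gives at once the case $p=0$: the integral is $\ll X\,\min\bigl((\sqrt X/c)^{-1/2},(\sqrt X/c)^{k-1/2}\bigr)\,\min\bigl((\sqrt{nX}/c)^{-1/2},(\sqrt{nX}/c)^{l-1/2}\bigr)$, which is exactly the claimed estimate with the saving factor replaced by $1$. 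It therefore remains to produce the saving factor for $p\ge1$ without disturbing the two minima.

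For the saving I would use the large-argument expansion $J_\nu(y)=\sum_{\pm}e^{\pm iy}W_{\pm}(y)$, valid for $y\gtrsim1$, whose amplitudes satisfy $W_\pm^{(j)}(y)\ll_{\nu,j}y^{-1/2-j}$, together with the power bound $J_\nu(y)\ll y^{\nu}$ and its derivative analogues for $y\lesssim1$. Substituting these into the two factors expresses the integral, up to non-oscillatory ranges, as a bounded combination of integrals $\int G(x)A(x)e^{i\Phi(x)}\,dx$ with phases $\Phi(x)=\frac{4\pi}{c}\bigl(\pm\sqrt{m(x+b)}\pm\sqrt{nx}\bigr)$ and slowly varying (and $G$-independent) amplitude $A$. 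On the support one has $\Phi'(x)=\frac{2\pi}{c}\bigl(\pm\frac{\sqrt m}{\sqrt{x+b}}\pm\frac{\sqrt n}{\sqrt x}\bigr)\asymp\frac{1}{c\sqrt X}(\pm\sqrt m\pm\sqrt n)$. For the two ``sum'' phases, and for the ``difference'' phases whenever $n\neq m$, this yields $|\Phi'(x)|\gg_m\frac{\sqrt n}{c\sqrt X}$ with no stationary point in $[\frac X2,\frac{5X}2]$ (the putative stationary point $x=\frac{nb}{m-n}$ is $O_m(1)$, hence outside the support for large $X$). Integrating by parts $p$ times with the operator $\frac{1}{i\Phi'}\frac{d}{dx}$ and observing that each derivative falls either on $G$, gaining $\ll P/X$ by the hypothesis $G^{(p)}\ll(X/P)^{-p}$, or on the slowly varying factor $A/\Phi'$, gaining $\ll 1/X$ and hence less (as $P\ge1$), one gains $\ll\frac{P/X}{|\Phi'|}\ll Pc(Xn)^{-1/2}$ per step, and so $[Pc(Xn)^{-1/2}]^p$ after $p$ steps. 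This gives the first term. In the ranges where an argument is $\lesssim1$ the corresponding factor is non-oscillatory: if it is the $n$-factor then $c\gtrsim\sqrt{nX}$, whence $Pc(Xn)^{-1/2}\gg1$ and the $p=0$ bound already suffices, while if it is the $m$-factor it is simply absorbed into the amplitude $A$ and one integrates by parts against the remaining phase as above.

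The main obstacle is precisely the ``difference'' phase $\sqrt{m(x+b)}-\sqrt{nx}$ at the single value $n=m$: there the leading part of $\Phi'$ cancels and $\Phi'(x)\asymp\frac{\sqrt m\,b}{cX^{3/2}}$ is far too small for the integration by parts above to gain anything. I would dispose of this (and of the bounded range $n\asymp m$ generally) by falling back on the trivial bound of the second paragraph: since $n\asymp m$ is fixed, $n^{-p/2}\asymp_{m,p}1$, so the $p=0$ estimate $X\min(\cdots)\min(\cdots)$ is already of the shape $X\,n^{-p/2}\min(\cdots)\min(\cdots)$, contributing the second term. Combining the two cases—the first term for the sum phases and for the difference phases with $n\neq m$, the second term for $n=m$—yields the stated estimate; since the added term $n^{-p/2}$ is nonnegative it only enlarges the right-hand side, so the bound holds for every $n$ and every integer $p\ge0$.
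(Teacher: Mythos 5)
Your proposal is correct, but it necessarily takes a different route from the paper, because the paper contains no proof of this lemma at all: its ``proof'' is the single sentence referring the reader to Lemma 3 of Luo \cite{LWenZhi}. Your argument --- the uniform bound $J_\nu(y)\ll\min(y^{-1/2},y^{\nu})$ for the $p=0$ case and the two minima; the expansion $J_\nu(y)=\sum_{\pm}e^{\pm iy}W_{\pm}(y)$ with $W_{\pm}^{(j)}(y)\ll y^{-1/2-j}$ followed by $p$-fold integration by parts against the phases $\frac{4\pi}{c}\bigl(\pm\sqrt{m(x+b)}\pm\sqrt{nx}\bigr)$, each step gaining $\ll Pc(Xn)^{-1/2}$; and the fallback to the trivial bound at the diagonal $n=m$, where the difference phase degenerates to $\Phi'\asymp_{m,b}c^{-1}X^{-3/2}$ and the term $n^{-p/2}\asymp_{m,p}1$ absorbs the loss --- is the standard proof of such estimates and is in substance the argument behind Luo's lemma. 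What your version buys: it is self-contained; it explains why both terms of the saving factor must appear (non-stationarity off the diagonal, phase degeneracy at $n=m$); and it covers the lemma as stated here, with Bessel order $l-1/2$ coming from the half-integral weight of $g$, whereas Luo's Lemma 3 concerns integral-weight $g$ (order $l-1$), so the paper's bare citation tacitly uses the routine but unstated fact that the proof is insensitive to the order. Two details you should tighten, though neither is a real gap: the lower bound $|\Phi'|\gg_m\sqrt{n}\,(c\sqrt{X})^{-1}$ for the difference phase with $n\neq m$ needs the cases $n>m$ (where $\sqrt{n}-\sqrt{m}\gg_m\sqrt{n}$) and $n<m$ (where $n\ll_m1$ and the $O_{m,b}(x^{-3/2})$ correction is absorbed using that $X$ is large in terms of $m$ and $b$, as the paper assumes); and since the argument of the second Bessel factor can cross $1$ inside the support of $G$ when $c\asymp\sqrt{nX}$, the clean dichotomy is $c\le\sqrt{nX}$ (argument $\geq 1$ throughout, expansion applies) versus $c>\sqrt{nX}$ (where your observation $Pc(Xn)^{-1/2}\gg1$ makes the $p=0$ bound sufficient), rather than a pointwise split by the size of the argument.
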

 
 \begin{proof}
 For a proof, we refer to \cite[Lemma 3]{LWenZhi}.  
  \end{proof}
 
 \begin{lem} \label{Partsum}
 For sufficiently large $X$ and any arbitrarily small $\epsilon > 0$, we have
 \begin{equation*}
\begin{split}
  \displaystyle{\sum_{c \le X} \frac{ d(c)}{c}} = \frac{1}{2} (\log X)^{2} + 2 \gamma \log X + O(1) \ll ({\log X})^{2}   \ll X^{\epsilon} ,
  \end{split}
 \end{equation*}
 where $\gamma$ is the Euler constant.
 \end{lem}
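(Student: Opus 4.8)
The plan is to reduce this to the classical Dirichlet divisor estimate via partial summation. Write $D(t) := \sum_{n \le t} d(n)$ and recall Dirichlet's theorem that $D(t) = t \log t + (2\gamma - 1) t + O(t^{1/2})$. Applying Abel summation to $\sum_{c \le X} d(c)/c$ with the weight $f(t) = 1/t$ gives
\begin{equation*}
\sum_{c \le X} \frac{d(c)}{c} = \frac{D(X)}{X} + \int_1^X \frac{D(t)}{t^2}\, dt,
\end{equation*}
where the boundary term contributes $\log X + (2\gamma - 1) + O(X^{-1/2})$.

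Substituting the asymptotic for $D(t)$ into the integral and splitting it into three pieces, I would evaluate the main contributions
\begin{equation*}
\int_1^X \frac{\log t}{t}\, dt = \frac{1}{2}(\log X)^2, \qquad (2\gamma-1)\int_1^X \frac{dt}{t} = (2\gamma-1)\log X,
\end{equation*}
and then bound the remaining error integral. Collecting the main terms, the $\log X$ coming from the boundary combines with $(2\gamma-1)\log X$ to give exactly $2\gamma \log X$, and together with $\tfrac12(\log X)^2$ this reproduces the claimed asymptotic, all remaining constants being absorbed into the $O(1)$. The last two bounds are then immediate: the right-hand side is dominated by its leading term $\tfrac12(\log X)^2$, and since $(\log X)^2 = o(X^{\epsilon})$ for every fixed $\epsilon > 0$, we obtain $\ll X^{\epsilon}$.

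The only point requiring any care is the error integral. Writing $E(t) := D(t) - t\log t - (2\gamma-1)t = O(t^{1/2})$, one has $\int_1^X E(t)\, t^{-2}\,dt \ll \int_1^{\infty} t^{-3/2}\,dt = O(1)$, the exponent $3/2 > 1$ guaranteeing absolute convergence so that this term is safely absorbed. A completely self-contained alternative avoids the divisor problem altogether: writing $d(c) = \sum_{ab=c} 1$ gives $\sum_{c \le X} d(c)/c = \sum_{ab \le X} (ab)^{-1} = \sum_{a \le X} a^{-1} \sum_{b \le X/a} b^{-1}$, after which the elementary harmonic-sum asymptotic $\sum_{b \le Y} b^{-1} = \log Y + \gamma + O(Y^{-1})$ together with the known value $\sum_{a \le X} (\log a)/a = \tfrac12(\log X)^2 + O(1)$ yields the same result. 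Either route presents no genuine obstacle; the statement is a standard consequence of the average order of the divisor function.
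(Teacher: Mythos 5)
Your proof is correct and takes essentially the same approach as the paper: the paper disposes of this lemma in one line by invoking Abel's partial summation formula and citing Apostol, and your main route (Abel summation against Dirichlet's estimate $D(t)=t\log t+(2\gamma-1)t+O(t^{1/2})$, with the error integral $\int_1^\infty t^{-3/2}\,dt$ absorbing the remainder) is exactly that computation carried out in full, with the bookkeeping of the $\log X$ terms done correctly. The self-contained hyperbola-method alternative you sketch is also valid, but it is an extra rather than a divergence from the paper's argument.
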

 
 \begin{proof}
 Proof is an application of Abel's partial summation formula and we refer to \cite[pp.70]{Apostal}.
 \end{proof}

\section{Proof of \thmref{Half wt form}}
It is sufficient to obtain the estimate of $S(f, g, b)$ when $f$ is a Poincar{\'e} series, because the space of cusp forms  $S_{k+\frac{1}{2}}(\Gamma_{0}(4N))$ is generated by the Poincar{\'e} series $\{ {P_{m}(\tau)} \mid {m \ge 1}\}$. Thus, we obtain the estimate for $S(P_{m}, g, b) =  \displaystyle{\sum_{n \ge 1} a_{P_{m}}(n+b) a_{g}(n) G(n)},$ where  $a_{P_{m}}(n)$ is the $n$-th Fourier coefficient of $m$-th Poincar{\'e} series. 
We assume that $X$  is sufficiently large  depending on $m$. 

Substitute the expression of $a_{P_{m}}(n)$  from \eqref{PMF coeff} to obtain
\begin{equation*}
\begin{split}
 & S({P_{m}, g, b} ) 
 = \displaystyle{\sum_{n \ge 1} a_{P_{m}}(n+b) a_{g}(n) G(n)},  \\ 
 &  =\!\! \frac{2 \pi i^{k+1/2}}{m^{k/2 - 1/4}} \!\displaystyle{\sum_{n \ge 1} \!\!\!a_{g}(n) G(n)} \!\!\left(\!\! \frac{1}{2 \pi i^{k+1/2} }\delta_{m,n+b} \!+ \!\!\!\!\!\!\sum_{c \ge 1, 4N \mid c}\!\!\!\! \!\!c^{-1} S_{k}(m, n+b; c) J_{k- \frac{1}{2}}\left( \frac{4 \pi \sqrt{m(n+b)}}{c} \right) \right) ,
\end{split}
\end{equation*}
which yields
\begin{equation*}
\begin{split}
|S({P_{m}, g, b} )|
& \ll | \displaystyle{ \sum_{c \ge 1, 4N \mid c} c^{-1}  \sum_{a \pmod c \atop \gcd(a, c) =1} \epsilon_{a}^{-(2k+1)} \left(\dfrac{c}{a}\right) e \left( \frac{m{d} + ba}{c} \right)} \\
& \qquad \times  \sum_{n \ge 1}    a_{g}(n) G(n) e \left( \frac{na}{c} \right) J_{k- \frac{1}{2}}\left( \frac{4 \pi \sqrt{m(n+b)}}{c} \right)|.
\end{split}
\end{equation*}
Now, we apply Poisson-Voronoi summation formula to obtain
\begin{equation*}
\begin{split}
|S({P_{m}, g, b} )|
& \ll  |\displaystyle{ \sum_{c \ge 1, 4N \mid c} c^{-1}  \sum_{a \pmod c \atop \gcd(a, c) =1} \epsilon_{a}^{-(2k+1)} \left(\dfrac{c}{a}\right) e \left( \frac{m{d} + ba}{c} \right)} \times  \frac{ 2 \pi i^{k+\frac{1}{2}}}{c} \left(\dfrac{c}{d}\right)\epsilon_d^{2k+1}\\
& 
 \times \sum_{n \ge 1}    a_{g}(n) e \left( \frac{-n {d}}{c} \right) \int_{0}^{\infty} G(x) J_{k- \frac{1}{2}}\left( \frac{4 \pi \sqrt{m(x+b)}}{c} \right) J_{l- \frac{1}{2}}\left( \frac{4 \pi \sqrt{nx}}{c}\right) dx|.
\end{split}
\end{equation*}
\begin{equation}\label{sum-est}
\begin{split}
|S({P_{m}, g, b} )|
& \ll  |\displaystyle{ \sum_{c \ge 1, 4N \mid c} c^{-2}  }\sum_{n \ge 1}    a_{g}(n) S_{k} (m-n, b; c) \\ 
& \qquad \qquad \times  \int_{0}^{\infty} G(x) J_{k- \frac{1}{2}}\left( \frac{4 \pi \sqrt{m(x+b)}}{c} \right) J_{l- \frac{1}{2}}\left( \frac{4 \pi \sqrt{nx}}{c}\right) dx|.
\end{split}
\end{equation}
Without loss of generality, we may assume $n \ll X^{A}$, for a fixed large constant $A > 0$.  We now break the sum in \eqref{sum-est} into three parts as follows;\\
 \noindent {\bf Part   I:}  $n \ll X^{4\epsilon}$.\\
  \noindent {\bf Part II:}  $n \gg X^{4\epsilon}$ and $Pc < (nX)^{1/2} X^{-\epsilon}$.\\
 \noindent  {\bf Part III:}  $n \gg X^{4\epsilon}$  and $Pc \ge (nX)^{1/2} X^{-\epsilon}$.\\
 
\noindent {\bf Estimate for Part I:}  In this case, the contribution  for the sum \eqref{sum-est} denoted by $S_1,$ is at most $X^{\frac{3}{4}+\epsilon}$ which is obtained as follows:
\begin{equation*}
\begin{split}
S_{1} & =  \displaystyle{ \sum_{c \ge 1, 4N \mid c} c^{-2}  }\sum_{n \ll X^{4\epsilon}}    a_{g}(n) S_{k} (m-n, b; c)  \\
& \quad \quad  \times  \int_{0}^{\infty} G(x) J_{k- \frac{1}{2}}\left( \frac{4 \pi \sqrt{m(x+b)}}{c} \right) J_{l- \frac{1}{2}}\left( \frac{4 \pi \sqrt{nx}}{c}\right) dx.
\end{split}
\end{equation*}

We apply the Weil-Sali{\'e} bound for the Kloosterman sum $S_{k} (m-n, b; c)$ and the Ramanujan-Petersson bound for the Fourier coefficients $a_g(n),$ and then use \lemref{bessel integral} (with $p = 0$);
\begin{equation*}
\begin{split}
|S_{1}| & \ll \displaystyle{ \sum_{c \ge 1, 4N \mid c} c^{-2+1/2} d(c) }  \sum_{n \ll X^{4\epsilon}}    n^{\epsilon}
X ([Pc (Xn)^{-1/2}]^{p}+ n^{-p/2})  \\ 
&  \qquad  \times  { \rm min}( (\sqrt{X}/c)^{-1/2},  (\sqrt{X}/c)^{k-1/2})     \times { \rm min}( (\sqrt{nX}/c)^{-1/2},  (\sqrt{nX}/c)^{l-1/2}),
\end{split}
\end{equation*}
\begin{equation*}
\begin{split}
|S_{1}|
& \ll X^{1 + \epsilon}  \displaystyle{ \sum_{c \ge 1, 4N \mid c} c^{-3/2} d(c) } \quad   { \rm min} ( (\sqrt{X}/c)^{-1/2},  (\sqrt{X}/c)^{k-1/2}),   \\
& \ll X^{1 + \epsilon} \left( \displaystyle{ \sum_{c \le \sqrt{X}} c^{-3/2} d(c) (\sqrt{X}/c)^{-1/2} }  +  \displaystyle{ \sum_{c \ge \sqrt{X}} c^{-3/2} d(c) (\sqrt{X}/c)^{k-1/2} }    \right) ,\\
& \ll X^{1 + \epsilon} \left( \displaystyle{ X^{-\frac{1}{4}} \sum_{c \le \sqrt{X}} c^{-1} d(c) } + \displaystyle{ \sum_{c \ge \sqrt{X}} c^{-3/2} d(c)  }   \right), \\
& \ll X^{1 + \epsilon} \left( \displaystyle{ X^{-\frac{1}{4}} \sum_{c \le \sqrt{X}} c^{-1} d(c) } + \displaystyle{ \sum_{c  = 1}^{\infty} c^{-3/2} d(c)  }  \right)  
 \ll X^{\frac{3}{4} + \epsilon},
\end{split}
\end{equation*}
here we have used \lemref{Partsum} and partial summation formula to obtain the estimate for first sum, and the last sum is an absolutely convergent series.\\

\noindent {\bf Estimate for Part II:}  In this case, the integral in \lemref{bessel integral} is of order $O(X^{-p \epsilon})$. Therefore, by choosing sufficiently large $p$, we see that the integral is  of order $O(X^{-A}).$ Hence  the contribution for the sum in \eqref{sum-est}  is negligible.\\

\noindent {\bf Estimate for Part III:}  In this case, we again decompose the sum
\begin{equation} \label{sum-est1}
\begin{split}
&   \displaystyle{ \sum_{c \ge 1, 4N \mid c} c^{-2}  }\sum_{n \gg X^{4\epsilon} \atop Pc \ge (nX)^{1/2} X^{-\epsilon}}    a_{g}(n) S_{k} (m-n, b; c)  \\
& \quad \quad  \times  \int_{0}^{\infty} G(x) J_{k- \frac{1}{2}}\left( \frac{4 \pi \sqrt{m(x+b)}}{c} \right) J_{l- \frac{1}{2}}\left( \frac{4 \pi \sqrt{nx}}{c}\right) dx
\end{split}
\end{equation}
 into sub-sums of type $M\le n \le 2M$ using the dyadic division method 
 and break the sum over $c$ into the following two parts; \\
 \noindent{\bf Part (a):} $c > \sqrt{2MX}$.\\
\noindent{\bf Part (b):}  $ \sqrt{MX} \frac{ X^{-\epsilon}}{P} \le c \le \sqrt{2MX}$.\\

\noindent {\bf Estimate for Part (a):} In this case, the contribution for the sum, denoted by $S_a,$ is  at most $X^{\frac{3}{4}+ \epsilon}$ which is obtained as follows:
 \begin{equation*}
\begin{split}
S_{a} & =  \log X \underset{X^{4 \epsilon} \le M \le X^{A}}{ \rm max} \displaystyle{ \sum_{c > \sqrt{2MX} \atop  4N \mid c} c^{-2}  }\sum_{M \le n \le 2M}   | a_{g}(n) S_{k} (m-n, b; c) |  \\ 
&  \quad  \times  \left|\int_{0}^{\infty} G(x) J_{k- \frac{1}{2}}\left( \frac{4 \pi \sqrt{m(x+b)}}{c} \right) J_{l- \frac{1}{2}}\left( \frac{4 \pi \sqrt{nx}}{c}\right)  dx \right|.
\end{split}
\end{equation*}
Now, apply the Weil-Sali{\' e} bound for the Kloosterman sum, Ramanujan-Petersson bound for the Fourier coefficients $a_g(n),$ and then use \lemref{bessel integral} to obtain 
\begin{eqnarray*}
S_{a} & \ll  & \log X \underset{X^{4 \epsilon} \le M \le X^{A}}{ \rm max} \displaystyle{ \sum_{c > \sqrt{2MX} \atop  4N \mid c} c^{-2 + 1/2} d(c)  }\sum_{M \le n \le 2M}    n^{\epsilon}  \quad X ([Pc (Xn)^{-1/2}]^{p}+ n^{-p/2})  \\ 
& \qquad  \times &  { \rm min} \left( \left(\frac{\sqrt{X}}{c} \right)^{-\frac{1}{2}},  \left(\frac{\sqrt{X}}{c} \right)^{k-1/2} \right)   { \rm min} \left( \left(\frac{\sqrt{MX}}{c} \right)^{-\frac{1}{2}},  \left(\frac{\sqrt{MX}}{c} \right)^{l-1/2} \right) ,\\
 & \ll   & X^{1+ \epsilon} \underset{X^{4 \epsilon} \le M \le X^{A}}{ \rm max}  \left(M ^{1 + \epsilon} \displaystyle{ \sum_{c > \sqrt{2MX} } c^{-3/2} d(c) }  (\sqrt{X}/c)^{k-1/2} \times (\sqrt{MX}/c)^{l-1/2} \right).
  \end{eqnarray*}
  Finally (by taking $p =0$), we obtain
 \begin{eqnarray*}
S_{a} & \ll   & X^{1+ \epsilon} \underset{X^{4 \epsilon} \le M \le X^{A}}{ \rm max}  \left(M ^{1 + \epsilon} \displaystyle{ \sum_{c > \sqrt{2MX} } c^{-3/2} d(c) }  (\sqrt{X}/c)^{k-1/2} \right) ,\\
 & \ll  &  X^{1+ \epsilon} \underset{X^{4 \epsilon} \le M \le X^{A}}{ \rm max}  \left(M ^{1 + \epsilon} X^{\frac{k-1/2}{2}} \displaystyle{ \sum_{c > \sqrt{2MX} } c^{ -k -1} d(c) }  \right), \\
 & \ll  &  X^{1+ \epsilon} \underset{X^{4 \epsilon} \le M \le X^{A}}{ \rm max}  \left(M ^{1 + \epsilon} X^{\frac{k-1/2}{2}} \int_{x=\sqrt{2MX}}^\infty x^{-k-1+\epsilon}dx \right) ,\\
 & \ll  &   X^{1+ \epsilon} \underset{X^{4 \epsilon} \le M \le X^{A}}{ \rm max}  \left(M ^{1 + \epsilon} X^{\frac{k-1/2}{2}}  (MX)^{-k/2 + \epsilon} \right) ,\\
 & \ll &   X^{1+ \epsilon} \underset{X^{4 \epsilon} \le M \le X^{A}}{ \rm max}  \left(M ^{1 -k/2 + \epsilon} X^{-\frac{1}{4} + \epsilon} \right)  \ll  X^{\frac{3}{4} + \epsilon}. 
 \end{eqnarray*}
 \noindent{\bf Estimate for Part (b):} $ \sqrt{MX} \frac{ X^{-\epsilon}}{P} \le c \le \sqrt{2MX}$. 

  If $\sqrt{X} \ge \sqrt{MX} \frac{ X^{-\epsilon}}{P}, $ then we have  $M \le P^{2} X^{2 \epsilon}$. In this case, the contribution, denoted by $S_{b,1},$  is at most $X^{\frac{3}{4} + \epsilon} P^{\frac{3}{2}}$ which is obtained as follows:   
 \begin{equation*}
\begin{split}
S_{b,1} & =  \log X \underset{X^{4 \epsilon} \le M \le P^{2} X^{2 \epsilon}}{ \rm max} \displaystyle{ \sum_{\sqrt{MX} \frac{ X^{-\epsilon}}{P} \le c \le \sqrt{2MX} \atop  4N \mid c} c^{-2}  }\sum_{M \le n \le 2M}   | a_{g}(n) S_{k} (m-n, b; c) |  \\ 
& \qquad \qquad  \qquad  \qquad  \times  \left|\int_{0}^{\infty} G(x) J_{k- \frac{1}{2}}\left( \frac{4 \pi \sqrt{m(x+b)}}{c} \right) J_{l- \frac{1}{2}}\left( \frac{4 \pi \sqrt{nx}}{c}\right)  dx \right|.
\end{split}
\end{equation*}
Now, apply the Weil-Sali{\' e} bound for the Kloosterman sum, Ramanujan-Petersson bound for the Fourier coefficients $a_g(n),$ and then use \lemref{bessel integral} to obtain 
\begin{equation*}
\begin{split}
S_{b,1} & \ll   \log X \underset{X^{4 \epsilon} \le M \le P^{2} X^{2 \epsilon}}{ \rm max} \displaystyle{ \sum_{\sqrt{MX} \frac{ X^{-\epsilon}}{P} \le c \le \sqrt{2MX} \atop  4N \mid c}  c^{-2 + 1/2} d(c)  }\sum_{M \le n \le 2M}    n^{\epsilon}  \quad X ([Pc (Xn)^{-1/2}]^{p}+ n^{-p/2})  \\ 
& \qquad  \times  { \rm min} \left( \left(\frac{\sqrt{X}}{c} \right)^{-\frac{1}{2}},  \left(\frac{\sqrt{X}}{c} \right)^{k-1/2} \right)   { \rm min} \left( \left(\frac{\sqrt{MX}}{c} \right)^{-\frac{1}{2}},  \left(\frac{\sqrt{MX}}{c} \right)^{l-1/2} \right) ,\\
 & \ll   X^{1+ \epsilon}  \underset{X^{4 \epsilon} \le M \le P^{2} X^{2 \epsilon}}{ \rm max} M ^{1 + \epsilon} \begin{cases}
    \displaystyle{ \sum_{\sqrt{MX} \frac{ X^{-\epsilon}}{P} \le c \le \sqrt{2MX} \atop  4N \mid c} } c^{-3/2} d(c)  \quad    { \rm min} \left( \left(\frac{\sqrt{X}}{c} \right)^{-\frac{1}{2}},  \left(\frac{\sqrt{X}}{c} \right)^{k-1/2} \right)  \\
  \qquad  \qquad \qquad   \qquad  \qquad \qquad  \times \left( (\sqrt{MX}/c)^{-1/2} \right), \\
\end{cases} \\
 & \ll   X^{1+ \epsilon}\underset{X^{4 \epsilon} \le M \le P^{2} X^{2 \epsilon}}{ \rm max}     M^{1+\epsilon} \times \begin{cases} \displaystyle{ \sum_{\sqrt{X}  \le c \le \sqrt{2MX} \atop 4N \mid c}  c^{-3/2} d(c) }  (\sqrt{X}/c)^{k-1/2} \times (\sqrt{MX}/c)^{-1/2}  \quad + \\    \displaystyle{ \sum_{\sqrt{MX} \frac{ X^{-\epsilon}}{P}  \le c \le \sqrt{X} \atop 4N \mid c}  c^{-3/2} d(c) }  (\sqrt{X}/c)^{-1/2} \times (\sqrt{MX}/c)^{-1/2}, 
 \end{cases} \\
\end{split}
\end{equation*}
\begin{equation*}
\begin{split}
& \ll   X^{1+ \epsilon}\underset{X^{4 \epsilon} \le M \le P^{2} X^{2 \epsilon}}{ \rm max}     M^{1+\epsilon} \times \begin{cases}  (MX)^{-1/4}  \displaystyle{ \sum_{\sqrt{X}  \le c \le \sqrt{2MX} \atop 4N \mid c}  c^{-1} d(c) }    \quad  + \\    X^{-1/2 + 1/4} M^{-1/4} \displaystyle{ \sum_{\sqrt{MX} \frac{ X^{-\epsilon}}{P}  \le c \le \sqrt{X} \atop 4N \mid c}  \frac{d(c)}{c} } ,
 \end{cases} \\
& \ll   X^{1+ \epsilon}\underset{X^{4 \epsilon} \le M \le P^{2} X^{2 \epsilon}}{ \rm max}   \left(  M^{1+\epsilon} \times  
 (MX)^{-1/4}  \displaystyle{ \sum_{\sqrt{MX} \frac{ X^{-\epsilon}}{P}  \le c \le \sqrt{2MX} \atop 4N \mid c}  \frac{d(c)}{c} } 
\right),\\
& \ll  \underset{X^{4 \epsilon} \le M \le P^{2} X^{2 \epsilon}}{ \rm max} \left( M^{ \frac{3}{4}} X^{\frac{3}{4} + \epsilon} \right)  \ll  X^{\frac{3}{4} + \epsilon} P^{\frac{3}{2}}.
\end{split}
\end{equation*}
 If $\sqrt{X} \le \sqrt{MX} \frac{ X^{-\epsilon}}{P} $, then we have  $M \ge P^{2} X^{2 \epsilon}$. In this case, the contribution, denoted by $S_{b,2},$  is at most $X^{\frac{3}{4} + \epsilon} P^{\frac{3}{2}}$ which is obtained as follows (by taking $p =0$):
  \begin{equation*}
\begin{split}
S_{b,2} & =  \log X \underset{  P^{2} X^{2 \epsilon} \le M \le X^{A}}{ \rm max} \displaystyle{ \sum_{\sqrt{MX} \frac{ X^{-\epsilon}}{P} \le c \le \sqrt{2MX} \atop  4N \mid c} c^{-2}  }\sum_{M \le n \le 2M}   | a_{g}(n) S_{k} (m-n, b; c) |  \\ 
&  \quad  \times  \left|\int_{0}^{\infty} G(x) J_{k- \frac{1}{2}}\left( \frac{4 \pi \sqrt{m(x+b)}}{c} \right) J_{l- \frac{1}{2}}\left( \frac{4 \pi \sqrt{nx}}{c}\right)  dx \right|.
\end{split}
\end{equation*}
Now, apply the Weil-Sali{\' e} bound for the Kloosterman sum, Ramanujan-Petersson bound for the Fourier coefficients $a_g(n),$ and then use \lemref{bessel integral} to obtain 
\begin{equation*}
\begin{split}
S_{b,2} & =  \log X \underset{  P^{2} X^{2 \epsilon} \le M \le X^{A}}{ \rm max} \displaystyle{ \sum_{\sqrt{MX} \frac{ X^{-\epsilon}}{P} \le c \le \sqrt{2MX} \atop  4N \mid c} c^{-2}  }\sum_{M \le n \le 2M}   | n^{\epsilon} c^{1/2} d(c) | \times   X ([Pc (Xn)^{-1/2}]+ n^{-p/2})   \\ 
&  \qquad   \times  { \rm min}( (\sqrt{X}/c)^{-1/2},  (\sqrt{X}/c)^{k-1/2})    \times { \rm min}( (\sqrt{nX}/c)^{-1/2},  (\sqrt{nX}/c)^{l-1/2}), \\
& \ll   X^{1+ \epsilon} \underset{  P^{2} X^{2 \epsilon} \le M \le X^{A}}{ \rm max} M^{1+ \epsilon} \displaystyle{ \sum_{\sqrt{MX} \frac{ X^{-\epsilon}}{P} \le c \le \sqrt{2MX} \atop  4N \mid c}}   c^{-3/2} d(c)   (\sqrt{X}/c)^{k-1/2} \times (\sqrt{MX}/c)^{-1/2}.  \\ 
\end{split}
\end{equation*}

\noindent
Since $k \ge 2$ and  $ \sqrt{MX} \frac{ X^{-\epsilon}}{P} \le c$ $\left(i.e., \frac{ \sqrt{X}}{c} \le \frac{ P X^{\epsilon}}{\sqrt{M}}  \le 1\right) ,$ therefore
 \begin{equation*}
\begin{split}
S_{b,2} & \ll   X^{1+ \epsilon} \underset{  P^{2} X^{2 \epsilon} \le M \le X^{A}}{ \rm max} M^{1+ \epsilon}\displaystyle{ \sum_{\sqrt{MX} \frac{ X^{-\epsilon}}{P} \le c \le \sqrt{2MX} \atop  4N \mid c}}   c^{-3/2} d(c)\left( \frac{ PX^{\epsilon}}{\sqrt{M}} \right)^{3/2}    \times (\sqrt{MX}/c)^{-1/2},  \\  
& \ll   X^{1+ \epsilon} P^{\frac{3}{2}} X^{-\frac{1}{4} + \frac{3}{2} \epsilon} \underset{  P^{2} X^{2 \epsilon} \le M \le X^{A}}{ \rm max} M^{1+ \epsilon} \displaystyle{ \sum_{\sqrt{MX} \frac{ X^{-\epsilon}}{P} \le c \le \sqrt{2MX} \atop  4N \mid c}}  M^{-\frac{3}{4} - \frac{1}{4}}  c^{-1} d(c),  \\ & \ll X^{\frac{3}{4}+ \epsilon} P^{\frac{3}{2}}  \underset{  P^{2} X^{2 \epsilon} \le M \le X^{A}}{ \rm max} \quad  (M)^{ \epsilon} \log(MX)  \qquad  \ll X^{\frac{3}{4} + \epsilon} P^{\frac{3}{2}} X^{A \epsilon}   \ll \qquad   X^{\frac{3}{4} + \epsilon} P^{\frac{3}{2}}.
\end{split}
\end{equation*}

Finally, all these estimates give us the required result.

\begin{acknowledgements}
The authors would like to thank Prof. B. Ramakrishnan for support and encouragement. The second author would like to thank HRI, Prayagraj for providing financial support through Infosys grant. Finally, the authors thank the referee for careful reading of the paper and many helpful suggestions. 
\end{acknowledgements}

\end{document}